\newcommand{\bcen}{\begin{center}}     \newcommand{\ecen}{\end{center}}
\newcommand{\bay}{\begin{array}}      \newcommand{\eay}{\end{array}}
\newcommand{\beq}{\begin{eqnarray*}}      \newcommand{\eeq}{\end{eqnarray*}}
\def\gl{\mathrm{gl.dim}}
\def\hh{\mathrm{hh.dim}}
\def\mod{\mathrm{mod}}
\begin{document}

\newtheorem{theorem}{Theorem}
\newtheorem{proposition}{Proposition}
\newtheorem{lemma}{Lemma}
\newtheorem{corollary}{Corollary}
\newtheorem{remark}{Remark}
\newtheorem{example}{Example}
\newtheorem{definition}{Definition}
\newtheorem*{conjecture}{Conjecture}

\title{\large {\bf Hochschild homology, global dimension, and truncated oriented cycles \footnote{The author
is sponsored by Project 10731070 NSFC.}}}

\author{\large Yang Han}

\date{\footnotesize KLMM, AMSS,
Chinese Academy of Sciences, Beijing 100190, China.\\ E-mail:
hany@iss.ac.cn}

\maketitle

{\it {\footnotesize Dedicated to Professor Claus Michael Ringel on
the occasion of his 65th birthday}}

\begin{abstract} It is shown that a bounded quiver algebra having a 2-truncated
oriented cycle is of infinite Hochschild homology dimension and
global dimension, which generalizes a result of Solotar and
Vigu\'{e}-Poirrier to nonlocal ungraded algebras having a
2-truncated oriented cycle of arbitrary length. Therefore, a bounded
quiver algebra of finite global dimension has no 2-truncated
oriented cycles. Note that the well-known ``no loops conjecture",
which has been proved to be true already, says that a bounded quiver
algebra of finite global dimension has no loops, i.e., truncated
oriented cycles of length 1. Moreover, it is shown that a monomial
algebra having a truncated oriented cycle is of infinite Hochschild
homology dimension and global dimension. Consequently, a monomial
algebra of finite global dimension has no truncated oriented cycles.
\end{abstract}

\medskip

Mathematics Subject Classification (2000): 16E40, 16E10, 16G10

\bigskip

Let $K$ be a field and $Q$ a finite quiver with vertex set $Q_0 :=
\{1, \cdots , n\}$ and arrow set $Q_1$. Denote the source and target
of an arrow $a \in Q_1$ by $s(a)$ and $t(a)$ respectively. Let $R$
be the arrow ideal of the path algebra $KQ$ of the quiver $Q$ and
$I$ an admissible ideal of $KQ$. Denote by $A$ the factor algebra
$KQ/I$ of $KQ$ modulo $I$. Let $e_1, \cdots , e_n$ be the trivial
paths corresponding to the vertices in $Q_0$. Then $S := \oplus
_{i=1}^nKe_i$ is a separable $K$-subalgebra of $A$ and $J := R/I$ is
a two-sided ideal of $A$. Moreover, $A = S \oplus J$. For quivers
and their representations we refer to \cite{ASS}.

Recall that a path $a_1a_2 \cdots a_l$ is called an {\it oriented
cycle} if $l \geq 1, a_i \in Q_1$ and $t(a_i)=s(a_{i+1})$ for all $1
\leq i \leq l$, where $a_{l+1} := a_1$. An oriented cycle $a_1a_2
\cdots a_l$ is said to be an {\it $m$-truncated oriented cycle} of
length $l$ in $A$ if $m \geq 2$ and $a_ia_{i+1} \cdots a_{i+m-1}=0$
but $a_ia_{i+1} \cdots a_{i+m-2} \neq 0$ in $A$ for all $1 \leq i
\leq l$, where we require $a_p = a_q$ if $1 \leq p,q \leq l+m-1$ and
$p \equiv q \; (\mod l)$. So a $2$-truncated oriented cycle in $A$
is just an oriented cycle $a_1a_2 \cdots a_l$ satisfying
$a_ia_{i+1}=0$ in $A$ for all $1 \leq i \leq l$, where again
$a_{l+1} := a_1$. Clearly, if $A$ has an $m$-truncated oriented
cycle of length $l$ then it must have $m$-truncated oriented cycles
of length $il$ for all $i \geq 1$. In particular, if $A$ has a loop
$a \in J^{m-1} \backslash J^m$ then it has an $m$-truncated oriented
cycles of length 1, thus $m$-truncated oriented cycles of arbitrary
length $l \geq 1$.

Recall that the {\it Hochschild homology dimension} of $A$ is $\hh A
:= \inf \{ n \in \mathbb{N}_0 | HH_i(A)=0$ for $i
> n \}$, where $HH_i(A)$ denotes the $i$-th Hochschild homology
group of $A$. (ref. \cite{B,H}).

\begin{theorem} \label{1} If $A$ has $2$-truncated oriented cycles
then $\hh A = \infty = \gl A$. \end{theorem}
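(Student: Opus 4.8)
The plan is to prove both conclusions by constructing an explicit infinite family of nonzero elements, one in each relevant Hochschild homology group and one showing that a simple module has projective resolution of unbounded length. The two statements $\hh A = \infty$ and $\gl A = \infty$ are closely related: recall that $\gl A = \infty$ whenever some simple module $S_i$ has infinite projective dimension, and $\hh A = \infty$ whenever infinitely many Hochschild homology groups are nonzero. Given a $2$-truncated oriented cycle $a_1 a_2 \cdots a_l$ (so $a_i a_{i+1} = 0$ in $A$ but each $a_i \neq 0$), I would extract from it the combinatorial data that produces nonvanishing in arbitrarily high degrees.

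First I would set up the standard complex for computing $HH_*(A)$ using the normalized bar resolution relative to the separable subalgebra $S = \oplus_i K e_i$. Because $A = S \oplus J$ with $S$ separable, one may compute Hochschild homology via the reduced bar complex with terms $A \otimes_S J^{\otimes_S i}$; here the $S$-bimodule structure means tensor factors are ``composable'' decorated paths. The key observation is that the cyclically composable strings built from the cycle, namely the classes of elements like $a_1 \otimes a_2 \otimes \cdots \otimes a_l \otimes a_1 \otimes \cdots$ (concatenating $i$ copies of the full cycle so that source/target matching holds cyclically), are cycles in the Hochschild complex precisely because consecutive products $a_j a_{j+1}$ vanish: every boundary term in the Hochschild differential $b$ that multiplies two adjacent tensor factors dies. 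I would verify that these elements are genuine cycles and, more delicately, that they are not boundaries, thereby producing nonzero homology classes in infinitely many degrees. This gives $\hh A = \infty$.

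For $\gl A = \infty$, I would argue that the simple module $S_{s(a_1)}$ has infinite projective dimension. The intuition is the same truncation phenomenon: the minimal projective resolution of a simple at a vertex lying on such a cycle cannot terminate, because the relations $a_j a_{j+1} = 0$ force the syzygies to cycle around the loop indefinitely, each syzygy reintroducing a copy of a simple at the next vertex of the cycle. Concretely, I would track the tops of consecutive syzygy modules and show they never stabilize to the zero module, the cyclic structure guaranteeing a periodic (hence non-terminating) pattern. Alternatively, and perhaps more cleanly, one can deduce $\gl A = \infty$ directly from $\hh A = \infty$: since finite global dimension forces $A$ to have finite Hochschild \emph{homological} dimension in this setting (indeed finite global dimension implies $A$ has a finite-length projective bimodule resolution, whence $HH_i(A) = 0$ for large $i$), the infinitude of $\hh A$ immediately yields $\gl A = \infty$. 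I would favor stating this implication explicitly to avoid redoing the syzygy bookkeeping.

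The main obstacle I anticipate is the non-vanishing (non-boundary) verification for the Hochschild cycles. Showing that the concatenated-cycle tensors are $b$-cycles is a short computation exploiting $a_j a_{j+1} = 0$; the real work is exhibiting that they survive to nonzero homology. The cleanest route is likely to pass to a suitable grading or to identify these classes with nonzero elements in the Hochschild homology of an associated truncated subalgebra (for instance the subalgebra or quotient generated by the cycle, which resembles a truncated polynomial or Nakayama-type algebra whose Hochschild homology is known to be nonzero in all degrees, echoing the Solotar--Vigu\'e-Poirrier computation cited in the abstract). I would reduce to that known computation by a localization or retraction argument that isolates the cycle, so that the generalization to nonlocal algebras of arbitrary cycle length follows from the local graded case by functoriality.
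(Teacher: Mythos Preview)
Your framework matches the paper's: the $S$-normalized complex, concatenated-cycle tensors as $b$-cycles (since every adjacent product $a_ja_{j+1}$ vanishes), and the deduction of $\gl A=\infty$ from $\hh A=\infty$ via Happel. The gap is exactly where you flag it, the non-boundary step, and your proposed fix does not work in this generality. A retraction or direct-summand argument isolating the cycle as a truncated Nakayama-type algebra is precisely what the paper uses for the \emph{monomial} case (Theorem~\ref{3}), where path-tensors form a basis of each $\bar{C}_S(A)_m$ and the cycle algebra's complex sits inside as a summand. For a general admissible ideal $I$ the relations need not be homogeneous, paths are not linearly independent in $A$, and there is no evident algebra map to or from the cycle algebra, so neither ``grading'' nor ``subalgebra/quotient'' nor ``functoriality'' gives a clean reduction. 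Remark~4 in the paper stresses that Theorem~\ref{1} genuinely extends the graded local result of Solotar--Vigu\'e-Poirrier; your plan would essentially re-import that case rather than prove the extension.

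The paper's actual argument is elementary and self-contained, and the key idea you are missing is this. Choose the $2$-truncated cycle $a_1\cdots a_l$ of \emph{minimal} length, which forces the $l$ cyclic rotations of $\xi=(a_1,\ldots,a_l,\ldots,a_1,\ldots,a_l)$ ($m$ copies, degree $lm-1$) to be linearly independent in $\bar{C}_S(A)_{lm-1}$; let $U$ be their span. One then checks case by case that $b$ of any $lm$-chain $(x_0,\ldots,x_{lm})$ lands in a complement of $U$ unless $x_0$ is trivial and $(x_1,\ldots,x_{lm})$ is a rotation of $\xi$, i.e.\ unless the chain is $(e_{s(a_i)},a_i,\ldots,a_{i-1})$ for some $i$. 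The boundary of each such chain is a two-term combination of rotations, and equating $\xi$ with $\sum_{i=1}^l k_i\,b(e_{s(a_i)},a_i,\ldots,a_{i-1})$ gives an $l\times l$ linear system in the $k_i$ whose only solution forces $\xi=0$, a contradiction, for all $m$ when $l$ is even and for all odd $m$ when $l$ is odd. This minimal-length/rotation/parity trick is the missing ingredient.
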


\begin{proof} We choose $a_1a_2 \cdots a_l$ to be a $2$-truncated oriented cycle
in $A$ of {\it minimal} length, so that all its {\it rotations}
$(a_1,a_2, \cdots ,a_l), (a_2, \cdots , a_l, a_1), \cdots \cdots ,$
\linebreak $(a_l, a_1, \cdots , a_{l-1})$ are distinct in the set
$Q_1 \times \cdots \times Q_1$ ($l$ copies).

In order to construct the nonzero Hochschild homology classes of
$A$, we observe the {\it $S$-normalized complex} $\bar{C}_S(A) := (A
\otimes_{S^e}(J^{\otimes_Sm}), b)$ of $A$ (ref. \cite[p.134]{C}),
where $b$ is the Hochschild boundary given by $b(x_0, x_1, \cdots ,
x_m) = \sum_{i=0}^{m-1}(-1)^i(x_0, \cdots , x_ix_{i+1}, \cdots ,
x_m) + (-1)^m(x_mx_0, x_1, \cdots , x_{m-1})$.

Now we consider the $(lm-1)$-chain $$\xi := (a_1, \cdots , a_l, a_1,
\cdots , a_l, \cdots \cdots , a_1, \cdots , a_l) \in
\bar{C}_S(A)_{lm-1} = A \otimes_{S^e} J^{\otimes_S (lm-1)}$$ of the
$S$-normalized complex $\bar{C}_S(A)$. Since $a_1a_2 \cdots a_l$ is
a 2-truncated oriented cycle, $b(\xi)=0$, i.e., $\xi$ is a nonzero
$(lm-1)$-cycle of $\bar{C}_S(A)$.

Next we show that $\xi$ is not an $(lm-1)$-boundary, and thus $\xi$
provides a nonzero element $\bar{\xi}$ in the $(lm-1)$-th Hochschild
homology $HH_{lm-1}(A)$ for infinitely many $m$, indeed for at least
all odd $m$. We assume on the contrary that $\xi = b(\sum k(x_0,
x_1, \cdots , x_{lm}))$, where $k \in K$ and $x_0, x_1, \cdots ,
x_{lm}$ are paths in $Q$. Denote by $U$ the $K$-subspace of
$\bar{C}_S(A)_{lm-1}$ generated by all rotations of $\xi$, and by
$V$ the complement space of $U$ in $\bar{C}_S(A)_{lm-1}$.

(1) If the path $x_0$ is nontrivial, i.e., $ x_0 \in J$, then
$b((x_0, x_1, \cdots , x_{lm})) \in J^2 \otimes_{S^e} (J^{\otimes_S
(lm-1)}) + \sum_{0 \leq i \leq lm-2} A \otimes_{S^e} (J^{\otimes_S
i} \otimes_S J^2 \otimes_S J^{\otimes_S (lm-i-2)}) \subseteq V$.

(2) If $x_1, \cdots , x_{lm}$ are not all arrows, i.e., at least one
of them is in $J^2$, then $b((x_0, x_1, \cdots , x_{lm})) \in
\sum_{0 \leq i \leq lm-2} A \otimes_{S^e} (J^{\otimes_S i} \otimes_S
J^2 \otimes_S J^{\otimes_S (lm-i-2)}) \subseteq V$.

(3) If the path $x_1 \cdots x_{lm}$ is not a rotation of the
oriented cycle $\xi$ then $b((x_0, x_1, \cdots , x_{lm})) \in V$.

By the analysis (1)-(3), we may assume that
$$\xi = b(\sum_{i=1}^l k_i (e_{s(a_i)}, a_i, \cdots , a_l, a_1,
\cdots , a_l, \cdots \cdots , a_1, \cdots , a_l, a_1, \cdots ,
a_{i-1})).$$ By the definition of the Hochschild boundary $b$, we
have $$\begin{array}{ccclcl} \xi &
= & & k_1(a_1, \cdots , a_l) & + & (-1)^{lm} k_1(a_l, \cdots , a_{l-1}) \\
& & + & k_2(a_2, \cdots , a_1) & + & (-1)^{lm} k_2(a_1, \cdots , a_l) \\
& & & \cdots \cdots & & \cdots \cdots \\ & & + & k_l(a_l, \cdots ,
a_{l-1}) & + & (-1)^{lm} k_l(a_{l-1}, \cdots , a_{l-2}).
\end{array}$$

Since $a_1a_2 \cdots a_l$ is a $2$-truncated oriented cycle in $A$
of minimal length, all rotations $(a_1, \cdots , a_l), (a_2, \cdots
, a_1), \cdots , (a_l, \cdots , a_{l-1})$ of $\xi$ are $K$-linear
independent in $\bar{C}_S(A)_{lm-1}$. If $l$ is even then we have
$k_{2p} = k_1$ and $k_{2p-1} = -k_1$ for all $1 \leq p \leq
\frac{l}{2}$. Thus $\xi = 0$. It is a contradiction. If $l$ is odd
then we take $m$ to be odd as well and have $k_p = k_1$ for all $1
\leq p \leq l$. Thus again $\xi = 0$. It is also a contradiction.
Hence $\hh A = \infty$.

It follows from \cite[p.110]{Hap} that $\gl A= \infty$. \end{proof}

\begin{remark} From the proof of Theorem~\ref{1} we know that
$\hh A = \infty$ even holds for infinite dimensional quiver algebra
$A$ having $2$-truncated oriented cycles.
\end{remark}

\begin{corollary} \label{2} A bounded quiver algebra of finite global
dimension has no 2-truncated oriented cycles.
\end{corollary}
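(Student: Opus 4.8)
The plan is to obtain this corollary as an immediate logical consequence of Theorem~\ref{1}, via contraposition. Theorem~\ref{1} asserts that any bounded quiver algebra $A$ possessing a $2$-truncated oriented cycle satisfies $\gl A = \infty$. The statement to be proved is precisely the contrapositive of the global-dimension half of that implication, so essentially no new mathematical content is required beyond invoking the theorem.

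Concretely, I would argue by contradiction. Suppose $A = KQ/I$ is a bounded quiver algebra with $\gl A < \infty$, and suppose toward a contradiction that $A$ nonetheless contains a $2$-truncated oriented cycle. Then Theorem~\ref{1} applies and yields $\gl A = \infty$, directly contradicting the hypothesis that the global dimension is finite. Hence $A$ can have no $2$-truncated oriented cycle, which is the assertion of the corollary.

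Since the deduction is a pure contraposition, there is no genuine obstacle: the entire substance lives in Theorem~\ref{1}, whose proof already establishes the chain ``$2$-truncated oriented cycle $\Rightarrow \hh A = \infty \Rightarrow \gl A = \infty$'' (the last step via \cite[p.110]{Hap}). The only point worth keeping straight is the class of algebras involved. The corollary is stated for \emph{bounded} quiver algebras, i.e.\ finite-dimensional algebras $KQ/I$ with $I$ admissible, which is exactly the setting of Theorem~\ref{1}; the preceding remark extends only the Hochschild-homology conclusion to infinite-dimensional quiver algebras, but here we use the finite-dimensional global-dimension statement, so the hypotheses match and the contrapositive applies without any further restriction.
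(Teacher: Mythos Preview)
Your proposal is correct and matches the paper's approach: the corollary is stated in the paper without proof, as it is the immediate contrapositive of the global-dimension conclusion of Theorem~\ref{1}. Your careful check that the bounded-quiver-algebra hypothesis aligns with the setting of Theorem~\ref{1} is appropriate but, as you note, introduces no new content.
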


\begin{remark} The well-known ``no loops conjecture", which has been proved to be true already (ref.
\cite{I,K,L}), says that a bounded quiver algebra of finite global
dimension has no loops. Corollary~\ref{2} implies that a bounded
quiver algebra of finite global dimension has no 2-truncated
oriented cycles as well.
\end{remark}

\begin{remark} Corollary~\ref{2} also can be proved by observing
the minimal projective resolutions of the simple modules
corresponding to the vertices on the 2-truncated oriented cycles.
Indeed, there are always simple direct summands in the syzygies.
\end{remark}

\begin{remark} In \cite{H} the author suggested the conjecture ``Let $A$ be a
bounded quiver algebra over a field $K$. Then $\gl A < \infty$ if
and only if $\hh A = 0$, if and only if $\hh A = \infty$", which is
equivalent to ``Infinite global dimension implies infinite
Hochschild homology dimension for bounded quiver algebras". So far
we have known that the conjecture holds for commutative algebras
\cite{AV}, monomial algebras \cite{H}, quantum complete
intersections of codimension 2 \cite{BE}, graded local algebras,
Koszul algebras and graded cellular algebras over a field of
characteristic zero \cite{BM}. In \cite{SV} the authors proved that
the Hochschild homology dimension of two classes of algebras of
infinite global dimension are infinite. The first class is a
generalization of quantum complete intersections but a
specialization of split extensions of algebras. The second class is
a subclass of finite-dimensional graded local algebras without any
assumption on the underlying field, more precisely, graded local
bounded quiver algebras having a 2-truncated oriented cycle of
length 2. They proved the result for the second class using methods
of differential graded algebra. Theorem~\ref{1} generalizes the
\cite[Theorem II]{SV} to nonlocal ungraded algebras having a
2-truncated oriented cycle of arbitrary length. Nevertheless, our
method is very short and elementary.
\end{remark}

\begin{theorem} \label{3} If $A$ is a monomial algebra having truncated oriented
cycles then $\hh A = \infty = \gl A$. \end{theorem}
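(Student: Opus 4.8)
The plan is to mimic the proof of Theorem~\ref{1}, constructing explicit nonzero Hochschild homology classes of $A$ in infinitely many degrees and then deducing $\gl A=\infty$. Fix an $m$-truncated oriented cycle $a_1a_2\cdots a_l$ in $A$ of minimal length. Because $A$ is monomial, the nonzero paths of $Q$ form a $K$-basis of $A$; in particular each length-$(m-1)$ subpath $p_i:=a_ia_{i+1}\cdots a_{i+m-2}$ is nonzero, while every product of $m$ consecutive arrows of the cycle vanishes. The key observation is that concatenating two consecutive length-$(m-1)$ blocks yields a path that contains $m$ consecutive arrows and is hence zero in $A$, precisely the vanishing needed to annihilate the Hochschild boundary of a chain built from such blocks.

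To make length-$(m-1)$ blocks tile the cycle, I would first replace $a_1\cdots a_l$ by the $m$-truncated oriented cycle of length $(m-1)l$ supplied by the observation recorded in the excerpt, so that blocks of length $m-1$ fit an integral number of times. For each $j\geq 1$ I then form the chain $\xi_j\in\bar{C}_S(A)_{jl-1}$ by listing the $jl$ consecutive length-$(m-1)$ blocks $p_1,p_m,p_{2m-1},\dots$ that tile this cycle $j$ times over. Each internal face of $b(\xi_j)$ multiplies two adjacent blocks, and the wrap-around face multiplies the last block by the first; in every case the resulting path contains $m$ consecutive arrows of the cycle and is therefore zero in $A$. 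Hence $b(\xi_j)=0$, so each $\xi_j$ is an $(jl-1)$-cycle, producing candidate classes in $HH_{jl-1}(A)$ in arbitrarily high degree.

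The main work, exactly as in Theorem~\ref{1}, is to show that infinitely many of the $\xi_j$ are not boundaries. I would split $\bar{C}_S(A)_{jl-1}=U\oplus V$ with $U$ the span of the block-rotations of $\xi_j$, and run the same three-case reduction: the boundary of a generator $(x_0,\dots,x_{jl})$ lands in $V$ whenever $x_0$ is nontrivial, whenever some $x_i$ fails to be a single length-$(m-1)$ block (for instance lies in $J^m$), or whenever the underlying path is not a block-rotation of $\xi_j$. This reduces a hypothetical relation $\xi_j=b(\cdots)$ to a combination of boundaries of block-shifted chains, whence a coefficient and sign computation, identical in spirit to the even/odd analysis of Theorem~\ref{1} where $\xi$ was likewise a periodic repetition, forces $\xi_j=0$ after choosing $j$ of the appropriate parity. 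The genuinely new point relative to Theorem~\ref{1} is that the building blocks are now length-$(m-1)$ paths rather than single arrows, so I must exploit the monomial (path-basis) structure to control the longer nonzero paths produced by multiplying blocks and to verify the linear independence of the finitely many distinct block-rotations; here the minimality of $l$ guarantees that the $l/\gcd(l,m-1)$ blocks within one tiling start at distinct vertices and are pairwise distinct. This step is the principal obstacle.

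Finally, $\gl A=\infty$ follows from $\hh A=\infty$ via \cite[p.110]{Hap}, exactly as in Theorem~\ref{1}; alternatively it can be seen directly by resolving the simple module at $s(a_1)$, where the cyclically overlapping relations $a_i\cdots a_{i+m-1}$ force a simple summand into every syzygy, giving infinite projective dimension. As a cross-check, since the conjecture of \cite{H} is known for monomial algebras, once $\gl A=\infty$ is in hand the equality $\hh A=\infty$ is immediate; I would nonetheless keep the explicit construction as the primary argument, both to parallel Theorem~\ref{1} and to make the statement self-contained.
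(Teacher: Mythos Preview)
Your explicit-cycle construction breaks down as soon as $m\geq 3$: the chains $\xi_j$ are \emph{always} boundaries. Take the simplest instance $A=K[a]/(a^m)$ (a single loop, $l=1$). Then $\xi_j=(a^{m-1},\ldots,a^{m-1})$ with $j$ entries, and the element $\eta=(a,\,a^{m-2},\,a^{m-1},\ldots,a^{m-1})\in\bar C_S(A)_j$ satisfies $b(\eta)=\xi_j$, because every face other than $b_0$ contains a factor $a^p$ with $p\geq m$. The same device works for an arbitrary $m$-truncated cycle with $m\geq 3$: split the first block $p_1=a_1\cdots a_{m-1}$ into $a_1$ followed by $a_2\cdots a_{m-1}$ and keep the remaining blocks intact. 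Each face of the resulting $(jl)$-chain other than the zeroth produces a product of at least $m$ consecutive arrows of the cycle and hence vanishes, while $b_0$ returns $\xi_j$ on the nose.

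The failure lies precisely in your case~(1). In Theorem~\ref{1} the entries of $\xi$ are arrows, so ``$x_0\in J$'' forces the zeroth slot of every face into $J^2$, which is disjoint from the span $U$ of rotations of $\xi$. In your setting the entries of $\xi_j$ already lie in $J^{m-1}\subseteq J^2$, so no length filtration separates $U$ from the image of $b$ on chains with nontrivial $x_0$. Case~(2) has the same defect: an $x_i$ of length strictly less than $m-1$ is exactly what the splitting above exploits, and your parenthetical ``for instance lies in $J^m$'' looks in the wrong direction.

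The paper sidesteps this entirely and does not attempt an explicit cycle. Using that $A$ is monomial, it observes that the span of those basis tensors in $\bar C_S(A)$ whose entries are nonzero paths along the chosen cycle $a_1\cdots a_l$ is a \emph{direct summand} subcomplex, isomorphic to $\bar C_{S'}(A')$ for the truncated single-cycle algebra $A'=KQ'/I'$; then \cite[Corollary~1]{H} gives $\hh A'=\infty$, hence $\hh A=\infty$ and $\gl A=\infty$. Incidentally, your closing cross-check (obtain $\gl A=\infty$ from the periodic syzygies on the cycle and then invoke the monomial case of the conjecture from \cite{H}) is itself a valid argument; it is only the explicit block-cycle route that cannot be salvaged for $m\geq 3$ in the form you propose.
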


\begin{proof} Suppose that $a_1a_2 \cdots
a_l$ is an $m$-truncated oriented cycle in $A$ of length $l$. Let
$A'$ be the bounded quiver algebra $KQ'/I'$ where the quiver $Q'$
has just $l$ vertices $1, 2, \cdots , l$ and $l$ arrows $x_1, x_2,
\cdots , x_l$ such that $t(x_i)=s(x_{i+1})$ for all $1 \leq i \leq
l$ with $x_{l+1} := x_1$, and $I'$ is the admissible ideal of $KQ'$
generated by all paths of length $l$. It is easy to construct a
direct summand of the $S$-normalized complex $\bar{C}_S(A)$
according to the $m$-truncated oriented cycle $a_1a_2 \cdots a_l$
such that it is isomorphic to the $S'$-normalized complex
$\bar{C}_{S'}(A')$ where $S':= \oplus _{i=1}^lKe'_i$ and $e'_1,
\cdots , e'_l$ are the trivial paths corresponding to the vertices
in $Q'_0$. Therefore, $HH_i(A')$ is a direct summand of $HH_i(A)$
for all $i \geq 1$. By \cite[Corollary 1]{H}, we have $\hh A' =
\infty$. Thus $\hh A = \infty$. Furthermore, $\gl A = \infty$.
\end{proof}

\begin{corollary} \label{4} A monomial algebra of finite global dimension
has no truncated oriented cycles. \end{corollary}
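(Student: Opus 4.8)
The plan is to obtain this immediately as the contrapositive of Theorem~\ref{3}, since that theorem already packages all the substantive work. I would argue as follows: suppose, for contradiction, that $A$ is a monomial algebra of finite global dimension, so $\gl A < \infty$, and yet $A$ possesses a truncated oriented cycle. Theorem~\ref{3} then applies directly to $A$ and yields $\gl A = \infty$, contradicting our standing assumption $\gl A < \infty$. Hence no truncated oriented cycle can exist, which is precisely the assertion of the corollary.

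The only point I would verify before invoking Theorem~\ref{3} is that the notion of ``truncated oriented cycle'' in the hypothesis of the corollary matches the one used in the theorem, namely an $m$-truncated oriented cycle for some $m \geq 2$ of some length $l \geq 1$. Since the definition given earlier in the paper is uniform in $m$ and $l$, and Theorem~\ref{3} is stated without restriction on either parameter, the transition is immediate and requires no case analysis. In particular, the reduction inside Theorem~\ref{3} to the model algebra $A' = KQ'/I'$ already absorbs the dependence on $m$ and $l$, so nothing further is needed here.

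There is, in truth, no genuine obstacle in this corollary: it is a pure logical restatement of Theorem~\ref{3}. If I wanted to make the writeup self-contained rather than citing the theorem, the only nontrivial ingredient I would have to reproduce is the construction, inside the $S$-normalized complex $\bar{C}_S(A)$, of a direct summand isomorphic to $\bar{C}_{S'}(A')$, together with the appeal to \cite[Corollary 1]{H} giving $\hh A' = \infty$. But since Theorem~\ref{3} is established earlier in the excerpt and may be assumed, the cleanest presentation is simply the one-line contrapositive above, and that is the route I would take.
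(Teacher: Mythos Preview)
Your proposal is correct and matches the paper's approach exactly: the paper states Corollary~\ref{4} immediately after Theorem~\ref{3} with no separate proof, treating it as the obvious contrapositive. Your additional remarks about the uniformity of the definition in $m$ and $l$ and about what a self-contained version would require are accurate but unnecessary for the writeup.
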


\begin{remark} Corollary~\ref{4} also can be proved by observing
the minimal projective resolutions of the simple modules
corresponding to the vertices on the truncated oriented cycle.
\end{remark}

\begin{remark} I don't know whether a bounded quiver algebra of finite global
dimension must have no truncated oriented cycles. Of course a
bounded quiver algebra of infinite global dimension may have no
truncated oriented cycles. For this, it is enough to consider the
algebra $A=KQ/I$ where the quiver $Q$ is given by $Q_0 = \{1,2\}$
and $Q_1 = \{a_1 : 1 \rightarrow 2 , \;\; a_2 : 2 \rightarrow 1\}$
and $I=(a_1a_2a_1)$.
\end{remark}

\medskip

\footnotesize

\end{document}